\DeclareMathOperator{\en}{End}
\DeclareMathOperator{\Sp}{Sp}
\DeclareMathOperator{\ima}{im}
\DeclareMathOperator{\mon}{Mon}
\DeclareMathOperator{\mi}{\textbf{m}}
\theoremstyle{plain}
\newtheorem{thm}{Theorem}[section]
\newtheorem{theorem}[thm]{Theorem}
\newtheorem{lemma}[thm]{Lemma}
\newtheorem{proposition}[thm]{Proposition}
\theoremstyle{definition}
\newtheorem{remark}[thm]{Remark}
\newtheorem{definition}[thm]{Definition}
\numberwithin{equation}{thm}
\newcommand{\sC}{{\mathcal C}}
\newcommand{\sL}{{\mathcal L}}
\newcommand{\sM}{{\mathcal M}}
\newcommand{\A}{{\mathbb A}}
\newcommand{\C}{{\mathbb C}}
\renewcommand{\H}{{\mathbb H}}
\renewcommand{\L}{{\mathbb L}}
\renewcommand{\P}{{\mathbb P}}
\newcommand{\Q}{{\mathbb Q}}
\newcommand{\R}{{\mathbb R}}
\newcommand{\Z}{{\mathbb Z}}
\title[Decomposition of Jacobians and Shimura varieties]{Decomposition of Jacobian varieties and Shimura subvarieties in $A_g$}
\author{Abolfazl Mohajer}
\address{School of Mathematical and Statistical Sciences, University of Galway, Galway, Ireland}
\email{abmohajer83@ugmail.com}
\subjclass[2010]{14G35, 14H10, 14H40}
\keywords{Shimura variety, Jacobian variety, Prym variety}
\begin{document}
\begin{abstract}
Using the decomposition of Jacobians with group action, we prove the non-existence of some Shimura subvarieties in the moduli space of ppav $A_{g}$ arising from families of dihedral and quaternionic covers of the complex projective line $\P^1$. 
\end{abstract}
	
\maketitle
	
\section{Introduction}

Consider a family of non-singular complex projective curves $f:C\to T$ of genus $g$. The Torelli map associated to this family is the map $\tau:T\to A_g$ which sends a curve $[C]$ to its Jacobian $[J(C)]$. It is known by the classical Torelli theorem that this map is injective. The moduli space $A_g$ of $g$-dimensional complex principally polarized abelian varieties has the structure of a \emph{Shimura variety}. Indeed, $A_g=A_{g,l}=\Gamma_g(l)\backslash \H_g$, where $\H_g$ is the Siegel uper half-space of genus $g$ and $\Gamma_g(l)$ is the principal congruence subgroup of level-$l$ in $Sp_{2g}(\Z)$ which is the kernel of the natural map $Sp_{2g}(\Z)\to \Sp_{2g}(\Z/l)$, where $l\geq 3$ is an odd integer. In this case $\Gamma_g(l)$ is torsion-free and the quotient $\Gamma_g(l)\backslash \H_g$ is a smooth complex submanifold. Note that $\H_g=\Sp_{2g}(\R)/U(g)$. Inside $A_g$ there are specific subvarieties called \emph{special subvarieties} or \emph{Shimura subvarieties}. These are algebraic subvarieties of the form $\Gamma_g(l)\backslash\L$, where $\L$ comes from an algebraic subgroup $G(\R)\hookrightarrow \Sp_{2g}(\R)$. Special subvarieties have many interesting properties. They are for instance \emph{totally geodesic} subvarieties. In this paper, we consider families  $f:C\to T$ of $G$-Galois covers of $\P^1$ that generate special subvarieties in $A_g$. The special subvarieties arising from families of curves and in particular by Galois covers of $\P^1$  have been studied by many authors for instance in \cite{CFG}, \cite{CFGP}, \cite{DN}, \cite{M}, \cite{MO} and \cite{MZ}. Below, we will describe our main tools for studying such families in this paper.

\subsection{Group algebra decomposition of abelian varieties} For a more detailed presentation of the content of this subsection, we refer to the book of Birkenhake and Lange, \cite{BL}, chapter \S13.6. Another reference is \cite{LR2} by Lange and  Rodriguez. Let $A$ be an abelian variety over the complex numbers and suppose that a finite group $G$ acts on $A$. This induces an algebra homomorphism
\begin{equation} 
\rho:\Q[G]\to \en_{\Q}(A),
\end{equation} 
where $\Q[G]$ denotes the rational group algebra of $G$. This homomorphism induces a decomposition of the abelian variety $A$ upto isogeny. Although the representation $\rho$ may not be faithful, we do not distinguish between elements of $\Q[G]$ and their images in $\en_{\Q}(A)$. An element $\alpha\in\Q[G]$ defines an abelian subvariety $A^{\alpha}:=\ima(m\alpha)\subseteq A$, where $m$ is some positive integer such that $m\alpha\in\en(A)$. We remark that $\Q[G]$ is a finite dimensional semisimple $\Q$-algebra and therefore admits a unique decomposition
\begin{equation}\label{algdecomp}
\Q[G]=Q_1\times\cdots\times Q_r
\end{equation}
with simple $\Q$-algebras $Q_1, \ldots, Q_r$. The following theorem gives the decomposition of abelian varieties with a $G$-group action and is key to our further analysis. %Let $1=e_1+\ldots+e_r$ denote the decomposition of the unit element $1\in\Q[G]$ given by ~\ref{algdecomp}. The elements $e_i$ are unit elements of the algebras $Q_i$ and, as elements of $\Q[G]$, form a set of orthogonal idempotents contained in the center of $\Q[G]$. 
%\begin{proposition} \label{isogdecomp}
%Let $A_i=A^{e_i}$ for $i=1, \ldots, r$. 
%\begin{itemize}
%\item[(a)] $A_i$ is a $G$-stable abelian subvariety of $A$ with $\Hom(X_i,X_j)=0$ for $i\neq j$. 
%\item[(b)] The addition map induces an isogeny
%$\mu:A_1\times\cdots\times A_r\to A$
%\end{itemize}
%\end{proposition} 
%This proposition has the following generalization
 \begin{theorem} \label{groupalgdecomp}
Let $G$ be a fintie group acting on an abelian variety $A$. Let $W_1, \ldots, W_r$ denote the irreducible $\Q$-representations of $G$ and $n_i:=\dim_{D_i}(W_i)$ with $D_i:=\en_G(W_i)$  for $i=1, \ldots, r$. Then there are abelian subvarieties $Y_1, \ldots, Y_r$ of $A$ and an isogeny. 
$A\sim  Y^{n_1}\times\cdots\times Y^{n_r}$.
\end{theorem}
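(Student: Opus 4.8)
The plan is to push the Wedderburn structure theory of the semisimple algebra $\Q[G]$ through $\rho$ and to read off the decomposition of $A$ from idempotents. First I would set up the elementary dictionary between idempotents of $\Q[G]$ and abelian subvarieties of $A$. Recall that for $\alpha\in\Q[G]$ one has $A^\alpha=\ima(m\alpha)$ for any $m$ with $m\alpha\in\en(A)$, and this is independent of $m$ up to isogeny. If $\alpha,\beta\in\Q[G]$ are orthogonal idempotents ($\alpha\beta=\beta\alpha=0$), then the identity $(\alpha+\beta)(\alpha x)=\alpha x$ shows $A^\alpha,A^\beta\subseteq A^{\alpha+\beta}=A^\alpha+A^\beta$; since $A^\alpha\cap A^\beta$ is annihilated by a nonzero integer it is finite, so the addition map $A^\alpha\times A^\beta\to A^{\alpha+\beta}$ is an isogeny, and inductively $A^{\epsilon_1}\times\cdots\times A^{\epsilon_s}\to A$ is an isogeny whenever $1=\epsilon_1+\cdots+\epsilon_s$ with pairwise orthogonal idempotents $\epsilon_k$. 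A second elementary point: conjugate idempotents give isogenous subvarieties --- if $u\in\Q[G]^\times$ and $u\pi u^{-1}=\pi'$, then choosing $N$ with $Nu,Nu^{-1}\in\en(A)$, the endomorphism $Nu$ restricts to a map $A^\pi\to A^{\pi'}$ whose composition with the restriction of $Nu^{-1}$ equals multiplication by $N^2$ on $A^\pi$, so both restrictions are isogenies.

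Next I would feed in the decomposition \eqref{algdecomp}, $\Q[G]=Q_1\times\cdots\times Q_r$, with central idempotents $e_1,\ldots,e_r$, so that $1=e_1+\cdots+e_r$ is an orthogonal decomposition. Putting $A_i:=A^{e_i}$, the first point gives the isotypic isogeny $A\sim A_1\times\cdots\times A_r$; here $A_i$ is the part of $A$ on which $G$ acts through $W_i$, and $A_i=0$ exactly when $\rho$ kills the factor $Q_i$, which causes no problem. It remains to decompose each $A_i$ into copies of a single abelian variety $Y_i$.

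For that I would apply Artin--Wedderburn to the simple algebra $Q_i$: it is isomorphic to a matrix algebra $M_{n_i}(D_i^{\mathrm{op}})$ over the division algebra $D_i=\en_G(W_i)$, and the exponent is exactly $n_i=\dim_{D_i}W_i$ since $W_i$ is (up to isomorphism) the unique simple left $Q_i$-module and a minimal left ideal of $M_{n_i}(D_i^{\mathrm{op}})$ has $D_i$-dimension $n_i$. Under this isomorphism the unit $e_i$ of $Q_i$ is the sum $\pi_{i1}+\cdots+\pi_{in_i}$ of the diagonal matrix units, which are pairwise orthogonal primitive idempotents that are pairwise conjugate by units of $Q_i$ (the permutation matrices). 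Applying the two elementary points inside $Q_i$ gives $A_i\sim A^{\pi_{i1}}\times\cdots\times A^{\pi_{in_i}}$ and $A^{\pi_{ij}}\sim A^{\pi_{i1}}$ for every $j$; setting $Y_i:=A^{\pi_{i1}}$ we obtain $A_i\sim Y_i^{n_i}$, and combining this with the isotypic decomposition yields $A\sim Y_1^{n_1}\times\cdots\times Y_r^{n_r}$.

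The step needing real care --- and the one I expect to be the main technical nuisance rather than a conceptual obstacle --- is the bookkeeping with denominators: every identity above lives a priori in $\en_\Q(A)=\en(A)\otimes_\Z\Q$, and one must consistently clear denominators, fixing a single $m$ that works simultaneously for $e_1,\ldots,e_r$, for the finitely many matrix units, and for the permutation matrices involved, in order to speak of genuine images and genuine isogenies rather than merely $\Q$-isogenies. Once this is arranged the argument is purely formal; the substance is the representation theory --- semisimplicity of $\Q[G]$ together with Artin--Wedderburn --- which is standard and is exactly what pins the exponents down to the numbers $n_i=\dim_{D_i}W_i$.
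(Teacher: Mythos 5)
Your proof is correct and is essentially the standard argument: the paper itself does not prove this theorem but cites Birkenhake--Lange \cite{BL}, \S 13.6, and your route --- central idempotents of $\Q[G]=Q_1\times\cdots\times Q_r$ giving the isotypic isogeny, then Artin--Wedderburn on each $Q_i\cong M_{n_i}(D_i^{\mathrm{op}})$ with pairwise orthogonal, pairwise conjugate primitive idempotents giving $A^{e_i}\sim Y_i^{n_i}$ --- is exactly the proof given there. The only cosmetic point is that to invoke your conjugation lemma you should extend a unit $u_0\in Q_i^{\times}$ to the unit $u_0+\sum_{k\neq i}e_k$ of $\Q[G]$, which changes nothing.
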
 
This decomposition is called the \emph{group algebra decomposition} of the abelian variety $A$ with respect to $G$. For more details about the above results we refer to  Birkenhake, Lange's book \cite{BL}, \S 13.6. In this article we are mainly interested in the case that $X$ is a smooth projective curve over $\C$ and $A=J(X)$ is the Jacobian of $X$ which is a principally polarized abelian variety over the complex numbers.\\

%Let us consider the family $Y_t\to \P^1$ of dihedral covers $D_n=\langle r,s| r^n=s^2=1, srs^{-1}=r^{-1}\rangle$, where $n$ is even and the family of covers of $\P^1$ ramified over 6 points given by monodromy $\{r,r^{-1}, s,s,s,s\}$. Note that it follows from the Riemann-Hurwitz that $g(Y_t)=2n-1$. Consider the normal subgroup $N_1:=\langle r^2, rs\rangle$ of index 2 in $D_n$. The quotient family $Y_t/N_1\to \P^1$ corresponds to the cover with the monodromy $(2,(1,1,1,1,1,1))$. The linde bundle $\sL_1$ of this family is of type $(1,2)$ and hence $\delta(\sL_1)=\frac{2(2+1)}{2}=3$. Now consider the the normal subgroup $N_2:=\langle r\rangle$, again of index 2 and the quotient family $Y_t/N_2\to \P^1$ corresponds to the cover with the monodromy $(2,(1,1,1,1))$. The linde bundle $\sL_1$ of this family is of type $(1,1)$ and hence $\delta(\sL_1)=\frac{1(1+1)}{2}=1$. Hence the smallest Shimura subvariety containing the image of this family in $A_g$ is of dimension at least $1+3=4$ and hecne $\dim S_f\geq 4>3$ and the family is not special. \\

\subsection{Families of Galois covers of $\P^1$} In this paper we deal with families of Galois covers of the projective line $\P^1$. Let us briefly describe such covers. Consider a finite subset $\Delta=\{t_1,\dots, t_r\}\subset \mathbb{P}^1$. It is then well-known that the fundamental group $\pi_1(\P^1\setminus\Delta)$ has the following presentation
\begin{equation} \label{braid}
\pi_1(\P^1\setminus\Delta):= \Gamma_r=\langle g_1,\dots, g_r\mid g_1\cdots g_r=1 \rangle
\end{equation}

 If $C\to \P^1$ is a Galois cover of curves with branch locus $\Delta$, let us set $U:=\P^1\setminus\Delta$ and $V:=f^{-1}(U)$. Then $f|_V:V\to U$ is an unramified Galois covering. If $G$ is the Galois group of $f$, then there is an epimorphism $\pi_1(U)\to G$ or in other words an epimorphism $\Gamma_{r}\to G$. We therefore make following defintion.

\begin{definition}\label{datum}
A datum is a triple $(\mi, G, \Phi)$, where $\mi=(m_1,\ldots, m_r)$ is an $r$-tuple of integers $m_i\geq 2$, $G$ is a finite group and $\Phi:\Gamma_{r}\to G$ is an epimorphism such that $\Phi(\gamma_i)$ has order $m_i$ for each $i$.
\end{definition}
The Riemann’s existence theorem asserts that a cover $C\to \mathbb{P}^1$ can be completely determined by a datum. The following, is one of our main tools for excluding Shimura varieties generated by families of Galois covers of $\P^1$. 

\begin{lemma}
Let $\sC\to T$ be a family of $G$-covers of $\P^1$ given by the datum $(\mi, G, \Phi)$.  Suppose that $N\lhd G$ a normal subgroup and write $G^{\prime}=G/N$. Consider the family of quotients $\overline{C}=C/N\to\P^1$ of $G^{\prime}$-covers of $\P^1$ gcorresponding to the datum $(\mi^{\prime}, G^{\prime}, \Phi^{\prime})$. If $(\mi, G, \Phi)$ gives rise to a Shimura subvariety in the Torelli locus $T_g$, then $(\mi^{\prime}, G^{\prime}, \Phi^{\prime})$ is also a Shimura subvariety in $T_{g^{\prime}}$. 
\end{lemma}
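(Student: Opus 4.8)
\emph{Plan.} The plan is to use Theorem~\ref{groupalgdecomp} to realise the quotient family as an isotypic piece of the given family, and then to move the hypothesis around a diagram of morphisms of Shimura varieties, using the standard facts that images and preimages of special subvarieties under morphisms of Shimura varieties are finite unions of special subvarieties, and that isogeny (Hecke) correspondences preserve specialness.

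\emph{Step 1: the decomposition in the family.} First I would introduce the variations. Let $\pi\colon\sC\to T$ be the family and $\V:=R^1\pi_*\Q$ its polarised $\Q$-variation of Hodge structure, with polarisation form $E$ and Torelli map $\tau\colon T\to A_g$, $t\mapsto[J(C_t)]$. The $G$-action acts fibrewise on $\V$; the idempotent $p_N:=\frac1{|N|}\sum_{n\in N}n\in\Q[G]$ is self-adjoint for $E$, so $\V=p_N\V\oplus(1-p_N)\V$ is an $E$-orthogonal decomposition into polarised sub-variations over $T$. Since $H^1(C_t/N,\Q)=H^1(C_t,\Q)^N=p_N\V_t$, the summand $p_N\V$ is, up to isogeny, the variation of the quotient family $\overline\sC=\sC/N\to T$; as it depends only on the induced $G'$-cover it descends along the natural dominant map $q\colon T\to T'$ to the variation realising $\tau'\colon T'\to A_{g'}$, $t'\mapsto[J(\overline C_{t'})]$ (here $T'$ and $(\mi',G',\Phi')$ arise from $(\mi,G,\Phi)$ by composing $\Phi$ with $G\twoheadrightarrow G'$ and deleting the branch points whose ramification index drops to $1$). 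The complement $(1-p_N)\V$ is a polarised weight-one variation of rank $2h$ with $h=g-g'$, which, after replacing it by an isogenous variation — quotient by a maximal isotropic subgroup of the polarisation kernel — I may assume comes from a family of principally polarised abelian varieties, with period map $\tau_P\colon T\to A_h$. Forming products of principally polarised abelian varieties yields a morphism of Shimura varieties $\iota\colon\sA\to A_g$, where $\sA$ is the Shimura variety parametrising such pairs (the fibre product of $A_{g'}$ and $A_h$ along their similitude data); $\iota$ is finite onto its image, the projections $p\colon\sA\to A_{g'}$ and $\sA\to A_h$ are morphisms of Shimura varieties, and $\tau$ coincides up to isogeny with $\widehat\tau:=\iota\circ\big((\tau'\circ q),\,\tau_P\big)$. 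By isogeny-invariance of specialness I may replace $\tau$ by $\widehat\tau$; so set $Z:=\overline{\widehat\tau(T)}$, which by hypothesis is a special subvariety of $A_g$ lying in $T_g$.

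\emph{Step 2: transporting specialness.} Next I would push $Z$ down. Put $\widetilde Z:=\overline{\big((\tau'\circ q),\,\tau_P\big)(T)}\subseteq\sA$; since $\iota$ is finite (hence closed), $\iota(\widetilde Z)=Z$ and $\dim\widetilde Z=\dim Z$. As $\iota$ is a morphism of Shimura varieties, $\iota^{-1}(Z)$ is a finite union of special subvarieties; the irreducible variety $\widetilde Z$ lies in one of them, say $S$, and then $Z=\iota(\widetilde Z)\subseteq\iota(S)\subseteq Z$ gives $\iota(S)=Z$, hence $\dim S=\dim Z=\dim\widetilde Z$, so $\widetilde Z=S$ is special. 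Finally $p$ is a morphism of Shimura varieties, so the Zariski closure of $p(\widetilde Z)$ is a special subvariety of $A_{g'}$; since $p\circ\big((\tau'\circ q),\,\tau_P\big)=\tau'\circ q$ and $q$ is dominant, this closure equals $\overline{\tau'(q(T))}=\overline{\tau'(T')}=:Z'$. Thus $Z'$ is special in $A_{g'}$, and it lies in $T_{g'}$ as a closure of Jacobian points — which is the assertion.

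\emph{Main obstacle.} Step~2 is formal. The real work, and the likely friction, is in Step~1: carrying out the group algebra decomposition of Theorem~\ref{groupalgdecomp} uniformly over $T$ while keeping the symplectic form and the polarisations under control, so that $\iota$ and $p$ are genuine morphisms of Shimura varieties (with appropriate level structures), rather than merely of coarse moduli spaces — and likewise making precise the passage to isogenous principally polarised models, the fibre product $\sA$, and the combinatorics defining $(\mi',G',\Phi')$ and $q$. One may avoid the product $\sA$ altogether by a Mumford--Tate argument for Step~2: the generic Mumford--Tate group of $\V$ centralises $p_N$ and therefore surjects onto the generic Mumford--Tate group of $p_N\V$, which is reductive and defines a sub-Shimura datum of the one cutting out $A_{g'}$; a surjection of Shimura data induces a dominant morphism of Shimura varieties, and since $Z$ being special means the period map of $\V$ is dominant onto the corresponding special subvariety, the same follows for $p_N\V$ — so $Z'$ is special.
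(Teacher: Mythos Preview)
Your proposal is correct and follows the same underlying strategy as the paper: realise $J(\overline C)$ as an isogeny factor of $J(C)$ uniformly over $T$, observe that the induced period map $T\to A_{g'}$ factors through the dominant projection $T\to T'$, and conclude that specialness descends. The paper's proof is extremely terse --- it cites \cite{M}, Lemma~4.4, records the factorisation $T\xrightarrow{\rho}T'\to A_{g'}$ with $\rho$ dominant, and stops, leaving the ``specialness transfers along isogeny factors'' step implicit. You make this step explicit in two ways: the product Shimura variety $\sA\to A_g$ together with the formal pull-push of special subvarieties, and the alternative Mumford--Tate surjection argument. Both are standard and either suffices; the paper effectively assumes the reader supplies one of them. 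Your ``main obstacle'' about keeping polarisations and level structures straight is real bookkeeping but not a genuine gap, and the Mumford--Tate variant you sketch at the end sidesteps it cleanly.
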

\begin{proof}
This is a generalization of \cite{M}, Lemma 4.4. Let $f:C\to \mathbb{P}^1$ be a Galois $G$-cover of degree $|G|=n$. Consider the following diagram in which $\overline{f}:\overline{C}\to \mathbb{P}^1$ is the quotient cover with the notations as above. 
\[\begin{tikzcd}
C \arrow{rr}{f} \arrow[swap]{dr}{\varphi} & & \mathbb{P}^1 \\
& \overline{C} \arrow[swap]{ur}{\overline{f}}
\end{tikzcd}\]

If $\Delta$ (resp. $\overline{\Delta}$) is the branch locus of $f$ (resp. $\overline{f}$), then we have $\overline{\Delta}\subseteq\Delta$. Therefore if $T\subseteq\A^l$ and $T^{\prime}\subseteq\A^{l^{\prime}}$ denote the parametrizing locus of the families $\sC$ and $\overline{\sC}$ respectively and if we let $\rho:T\to T^{\prime}$ denote the projection which comes from the natural projection $\A^l\to \A^{l^{\prime}}$. Here $\overline{\sC}\to T^{\prime}$ is the family of curves associated with the datum $(\mi^{\prime}, G^{\prime}, \Phi^{\prime})$. Let $J\to T$ and $\overline{J}\to T^{\prime}$ be the corresponding relative Jacobians. Then $\rho^*\overline{J}$ is an isogeny factor of $J$. Let $\theta:T\to A_{g^{\prime}}$ be the morphism corresponding to $\rho^*\overline{J}$. Now this morphism is the composition of the projection $\rho:T\to T^{\prime}$ which is a dominant morphism and the morphism $T^{\prime}\to A_{g^{\prime}}$. Hence the closure of the image of $\theta$ is $Z^{\prime}$. 
\end{proof}

\begin{remark} \label{monodromydecomp}
 If the family $f:Y\rightarrow T$ gives rise to a Shimura subvariety in $A_{g}$ then the connected monodromy group $\mon^{0}$ is a normal subgroup of the generic Mumford-Tate group $M$. In fact in this case $\mon^{0}=M^{der}$. See for example \cite{R}. Consequently, if $f$ gives rise to a Shimura subvariety and $M^{ad}_{\mathbb{R}}=\prod_{1}^{l} Q_{i}$ as a product of simple  Lie groups then $\mon^{0,ad}_{\mathbb{R}}=\prod_{1}^{l} Q_{i}$.
 \end{remark}

\begin{remark} \label{smallestshimura}
Let $f:C\to T$ be a family of curves. Let $M$ be the generic Mumford-Tate group of the family $f:C\rightarrow T$. For the definition and construction of the generic Mumford-Tate group we refer to Rohde's book \cite{R}. Note that $M$ is a reductive $\mathbb{Q}$-algebraic group . Let $S_{f}$ be the natural Shimura variety associated to $M$. The Shimura subvariety $S_{f}$ is in fact the \emph{smallest} special subvariety that contains the image $Z$ of $T$ in $A_g$. The dimension $S_f$ depends on the real adjoint group $M^{ad}_{\mathbb{R}}$. Namely, if $M^{ad}_{\mathbb{R}}=Q_{1}\times\cdots\times Q_{r}$ is the decomposition of $M^{ad}_{\mathbb{R}}$ to $\mathbb{R}$-simple groups then $\dim S_{f}= \sum \delta(Q_{i})$. If $Q_{i}(\mathbb{R})$ is not compact then $\delta(Q_{i})$ is the dimension of the corresponding symmetric space associated to the real group $Q_{i}$ which can be read from Table V of \cite{H}. If $Q_{i}(\mathbb{R})$ is compact, i.e., if $Q_{i}$ is anisotropic we set $\delta(Q_{i})=0$. We remark that for $Q=PSU(p,q)$, $\delta(Q)=pq$ and for $Q=Psp_{2p}$, $\delta(Q)=\frac {p(p+1)}{2}$. Note also that $Z$ is a Shimura subvariety if and only if $\sum\delta(Q_{i})=l-3$, i.e., if and only if $\dim Z=\dim S_{f}=l-3$, where $l$ is the number of branch points of the covers in the family. According to Remark~\ref{monodromydecomp} above, if $M^{ad}_{\mathbb{R}}=\prod_{1}^{l} Q_{i}$ is the decomposition
 into $\mathbb{R}$-simple factors, then $\mon^{0,ad}_{\mathbb{R}}=\prod_{1}^{l} Q_{i}$ as well. We need to find eigenspaces
 $\mathcal{L}_{j_i}$ of types $\{a_i,b_i\}$ with $a_i$ and $b_i$ large enough in the sense described below.  Concretely, if we can find eigenspaces $\mathcal{L}_{j_i}$ as above, then these eigenspaces, being of types $\{a_i,b_i\}$, give rise to non-isomorphic $Q_i=\mon^{0,ad}_{\mathbb{R}}(\mathcal{L}_{j_i})=PSU(a_i,b_i)$ for $i\in K$ in the above decomposition of $\mon^{0,ad}_{\mathbb{R}}$ and if for $\delta(\mathcal{L}_{j_i})=\delta(\mon^{0,ad}_{\mathbb{R}}(\mathcal{L}_{j_i}))$, we have that $\sum \delta(\mathcal{L}_{j_i})>l-3$ (this is what we mean by \emph{large enough} in the above, note that $\delta(\mathcal{L}_{j_i})$ depends in our examples only on $a_i$ and $b_i$; see construction ~\ref{smallestshimura}), then
 $\dim S_f\geq \sum \delta(\mathcal{L}_{j_i})>l-3$.  
\end{remark}

\section{Shimura subvarieties}

\subsection{Families of dihedral covers} Throughout this note, we fix the following presentation for the dihedral group of order $2n$
\begin{equation}\label{dihed pres}
D_{n}=\langle r,s| r^{n}=s^2=(rs)^2=1\rangle
\end{equation}
The elements of the form $r^u$ with $1\leq u\leq n$ are called \emph{rotation} and elements of the form $r^as$ with $a\geq 0$ are called \emph{reflection}. Reflections are elements of order 2. We begin our analysis of dihedral covers by the following lemma.

\begin{lemma}\label{even}
Let $f:C\to \mathbb{P}^1$ be a $D_n$-cover. Then the number of branch points whose ramification is given by reflections $r^as$ as mentioned above is a non-zero even number. 
\end{lemma}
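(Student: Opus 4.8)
The plan is to translate the statement into a statement about the datum defining the cover, and then to exploit the product-one relation in $\Gamma_r$ together with surjectivity of the monodromy representation.

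First I would invoke Riemann's existence theorem to present $f:C\to\P^1$ by a datum $(\mi, D_n, \Phi)$ with $\Phi:\Gamma_r\to D_n$ an epimorphism, so that the branch points $t_1,\dots,t_r$ correspond to the standard generators $g_1,\dots,g_r$ of $\Gamma_r$, and the ramification over $t_i$ is governed by the conjugacy class of $\Phi(g_i)$. Since the rotation subgroup $\langle r\rangle\lhd D_n$ has index $2$, it is normal, and the complement $D_n\setminus\langle r\rangle$ (the set of reflections) is a union of conjugacy classes; hence ``$t_i$ is a branch point with reflection ramification'' is equivalent to ``$\Phi(g_i)\notin\langle r\rangle$'', independently of the choice of generators.

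Next I would introduce the sign homomorphism $\varepsilon:D_n\to\Z/2\Z$ with kernel $\langle r\rangle$, determined by $\varepsilon(r^u)=0$ and $\varepsilon(r^as)=1$ (well-defined because $D_n/\langle r\rangle\cong\Z/2\Z$). The composite $\varepsilon\circ\Phi:\Gamma_r\to\Z/2\Z$ is a group homomorphism, so applying it to the defining relation $g_1\cdots g_r=1$ yields $\sum_{i=1}^r\varepsilon(\Phi(g_i))=0$ in $\Z/2\Z$. The left-hand side is, modulo $2$, exactly the number of indices $i$ with $\Phi(g_i)$ a reflection; therefore the number of reflection branch points is even.

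Finally, for the non-vanishing: if no $\Phi(g_i)$ were a reflection, then $\ima(\Phi)\subseteq\langle r\rangle\subsetneq D_n$, contradicting the surjectivity of $\Phi$. Hence at least one branch point has reflection ramification, and the total count is a nonzero even integer. I do not anticipate any real obstacle here; the only point deserving a little care is the bookkeeping that identifies ``ramification given by a reflection'' with ``$\Phi(g_i)$ lies outside the rotation subgroup'', which is precisely the dictionary between $G$-Galois covers of $\P^1$ and data recalled before Definition~\ref{datum}.
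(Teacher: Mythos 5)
Your proof is correct and complete. Note that the paper itself does not give an argument for this lemma: it simply defers to Proposition 3.12 of \cite{M23} (with the side remark that the cover is assumed unramified over infinity). Your write-up supplies exactly the elementary argument one would expect behind that citation: since $\langle r\rangle$ is normal of index $2$, the set of reflections is a union of conjugacy classes, so ``reflection ramification'' is well-defined on branch points; composing the monodromy epimorphism $\Phi:\Gamma_r\to D_n$ with the quotient map $D_n\to D_n/\langle r\rangle\cong\Z/2\Z$ and evaluating on the relation $g_1\cdots g_r=1$ gives the parity statement, and surjectivity of $\Phi$ rules out the count being zero. The only cosmetic issue is the clash of notation between $r$ as the number of branch points and $r$ as the rotation generator of $D_n$; you should rename one of them, but this does not affect the argument. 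Your version has the advantage of being self-contained, whereas the paper's ``proof'' is a black-box reference.
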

\begin{proof}
For this and more general results on dihedral and non-abelian covers we refer to \cite{M23}, especially Proposition 3.12. Note that we always assume that the cover is not ramified over the infinity. 
\end{proof}

 Let $C$ be a curve with $D_{2p}$-action, where $p$ is an odd prime. If $C\to Y=C/D_{2p}$ is the quotient cover, then by results of \cite{LR1}, \S 4.4 we have
\begin{equation}\label{dihed decomp}
J(C)\sim  J(Y)\times P(C_{\langle r\rangle}/Y)\times P(C_{D_p}/Y)\times P(C_{\widetilde{D}_p}/Y)\times B
\end{equation}
Here $D_{p}=\langle  r^{2}, s\rangle, \widetilde{D}_p=\langle  r^{2}, rs\rangle$ and $ \langle r\rangle$ are normal subgroups of index 2. The factor $B$ is a product of abelian varieties which we do not need in the sequel.

%\begin{remark}
%In the above Proposition, if we assume that $l>12$, we can give a simpler proof. Consider the the normal subgroup $N_1:=\langle r\rangle$ of index 2 and the quotient family of double covers $C_t/N_1\to \P^1$. Let $k$ be the number of branch points with ramification not in $\langle r\rangle$.  Then the above quotient family corresponds to the family of covers of $\P^1$ with monodromy data $(2,(\underbrace{1,1,\ldots, 1}_{k}, 0,\ldots, 0))$, where the number 1's is equal to $k$. If $k>6$, then this family is not special by the results of \cite{M}. So we assume that $k\leq 6$ hence at least $l-k>6$ branch points are with ramification index in $\langle r\rangle$. Considering the normal subgroup $N_2=\langle r^2, s\rangle$ of index 2, the quotient family $C_t/N_2\to \P^1$ yields a cover with the monodromy $(2,(\underbrace{1,1,\ldots, 1}_{\geq l-k}, 0\ldots, 0))$. Since $l-k>6$ again by the list in \cite{M}, this is not a Shimura family.  %The only other possibility is that there are some branch points with monodromy $r^2$. Then we consider the normal subgroup $N_3=\langle r^m\rangle$ gives  the quotient family $Y_t/N_3\to \P^1$ which is of the form with the monodromy $(m,(1,1,1,1,\ldots, 1))$. But by the table in [Moonen] this family is also not special.... TO BE COMPLETED!
%\end{remark}

In the case that $n=2p$, with $p$ an odd prime number, one obtains the following statement about 1-dimensional families of dihedral covers.

%\begin{proposition} \label{dihed geq 8}
 %There are no Shimura curves arising from $D_{2p}$-covers of $\P^1$ that lie generically in the Torelli locus $T_g$ for $g>8$. 
%\end{proposition}
%\begin{proof}
%Let $C_t$ be such a Shimura family of curves. Using the notation of the Presentation \ref{dihed pres}, one observes that the monodromy datum of the family should be of the form $(r^{u_1}, r^{u_2}, r^{v_1}s, r^{v_2}s)$, where $u_1|2p$ and $u_2|2p$.  As above, consider the normal subgroup $N_1:=\langle r\rangle$ of index 2 and the quotient family of double covers $C_t/N_1\to \P^1$. This cover is ramified over exactly 2 points and hence $C_t/N_1\simeq \P^1$. Therefore $C_t\to C_t/N_1=\P^1$ is a cyclic $\Z_{2p}$-cover of $\P^1$. Since $u_1|2p$ and $u_2|2p$, we distinguish three cases, namely, $u_1=u_2=2$, $u_1=u_2=p$ and $u_1=2, u_2=p$. In the first two cases where $u_1=u_2$, we have a family of $\Z_{2p}$-cover of $\P^1$. However,  by the results of \cite{CLZ} there are no Shimura varieties contained in the locus $T_{g,n}$ of cylic $\Z_{n}$-covers of $\P^1$. If $u_1=u_2=p$, then take the quotient of the cover $C_t\to C_t/N_1=\P^1$ by the normal subgroup $\Z_p$ which yields an intermediate cover $C_t\to C_t/\Z_p$. Again the double cover $C_t/\Z_p\to \P^1$ is ramified over two points and hence $C_t/\Z_p\simeq \P^1$ and the $\Z_p$-cover $C_t\to C_t/\Z_p=\P^1$ ramified over 4 points. Again by results of  \cite{CLZ} there are no Shimura varieties contained in the locus $T_{g,p}$ of cylic $\Z_{p}$-covers of $\P^1$ and the claim is proved. 
%\end{proof}
In the following, we exclude all of the families of dimension greater than 12 and with a special ramifications type.

\begin{proposition} \label{dihed geq 3}
Suppose that we have a family of $D_{2p}$-covers of $\P^1$ with $l>12$ branch points (hence a family of dimension $\dim >9$) and such that all of the ramifications in $\langle r\rangle$ (i.e., given by rotations) are of the form $r^u$ with $u$ an odd number (the notation being as in Presentation \ref{dihed pres}). Then the image of this family under the Torelli map is not a Shimura subvariety. 
\end{proposition}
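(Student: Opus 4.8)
The plan is to apply the criterion of Remark~\ref{smallestshimura}: it is enough to produce a single eigenspace $\mathcal{L}$ of Hodge type $\{a,b\}$ with $ab>l-3$, since then $\dim S_f\ge\delta\bigl(\mon^{0,\mathrm{ad}}_{\R}(\mathcal{L})\bigr)=ab>l-3=\dim Z$, so the image $Z$ of the family is not a Shimura subvariety.

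First I would organise the $D_{2p}$-cover as a tower $C\longrightarrow X:=C/\langle r\rangle\longrightarrow\P^1$. Since $\langle r\rangle$ has index two, $X\to\P^1$ is the double cover branched exactly over the $2k$ branch points with reflection monodromy, where $k\ge1$ by Lemma~\ref{even}, so $g(X)=k-1$; and $C\to X$ is a cyclic $\Z/2p$-cover branched over the $2(l-2k)$ points of $X$ above the $l-2k$ branch points with rotation monodromy $r^{u}$, with local monodromies $\pm u$ there. The group $\langle r\rangle\lhd D_{2p}$ acts on $H^1(C,\C)$, and for a faithful character $\chi$ of $\langle r\rangle\cong\Z/2p$ (there are $\varphi(2p)/2$ of these up to conjugation; together they cut out the isogeny factor of $J(C)$ in the decomposition of Theorem~\ref{groupalgdecomp} on which $\langle r\rangle$ acts faithfully) I write $V_\chi\subseteq H^1(C,\C)$ for the $\chi$-eigenspace, with Hodge type $\{a(\chi),b(\chi)\}$. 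The hypothesis that every rotation monodromy is an \emph{odd} power $r^{u}$ is used here: it forces the rank-one local system on $X$ attached to $\chi$ to be ramified at all $2(l-2k)$ branch points of $C\to X$ and keeps its exponents away from the resonant values $\{ju/2p\}\in\{0,\tfrac12\}$, which puts the corresponding eigen-local-system in the ``large monodromy'' regime, so that, as in the setup of Remark~\ref{smallestshimura}, the monodromy of the family on $V_\chi\oplus V_{\bar\chi}$ is the full $\psu(a(\chi),b(\chi))$. The eigenspace count for cyclic covers then gives $\dim_{\C}V_\chi=2g(X)-2+2(l-2k)=2(l-k-2)$.

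Next I would determine $\{a(\chi),b(\chi)\}$ from the Chevalley--Weil formula for $C\to X$, which writes $a(\chi)$ as $g(X)-1$ plus the degree of an explicit effective twisting divisor on $X$. If $k\ge3$ then $g(X)\ge2$, and Riemann--Roch on $X$ gives $a(\chi),b(\chi)\ge g(X)-1=k-2\ge1$; with $a(\chi)+b(\chi)=2(l-k-2)$ this yields $a(\chi)b(\chi)\ge(k-2)(2l-3k-2)$, and an elementary check shows this exceeds $l-3$ for every $3\le k\le\lfloor l/2\rfloor$ once $l>12$. If $k\le2$ (so $X=\P^1$ or an elliptic curve) I would instead note that the faithful part of $H^{1,0}(C)$ has dimension $(p-1)(l-k-2)$, so that the mean of $a(\chi)$ over the $p-1$ faithful characters equals $l-k-2\ge9$; together with a combinatorial argument using that $C\to X$ then has at least $18$ branch points — which rules out the possibility that every faithful character has a type of the form $\{0,\ast\}$ — this produces a faithful $\chi$ with $a(\chi),b(\chi)\ge1$, hence $a(\chi)b(\chi)\ge2(l-k-2)-1>l-3$. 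Either way we obtain the eigenspace required in the first paragraph, and Remark~\ref{smallestshimura} finishes the argument. The main obstacle is this last step: carrying out the Chevalley--Weil bookkeeping accurately (the convention for the fractional parts and the twisting line bundle on $X$ when $g(X)>0$) and making the inequality $a(\chi)b(\chi)>l-3$ uniform over all admissible ramification data, in particular handling the low-genus cases $k=1,2$; the monodromy input and the reduction to one eigenspace are the standard ones of Remark~\ref{smallestshimura}.
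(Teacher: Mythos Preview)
Your approach differs substantially from the paper's and, as written, has genuine gaps. The paper does not use the cyclic tower $C\to X=C/\langle r\rangle$ or any Chevalley--Weil computation. Instead it takes \emph{two} index-$2$ normal subgroups of $D_{2p}$, namely $\langle r\rangle$ and $D_p=\langle r^2,s\rangle$, and passes to the quotient families $C_t/\langle r\rangle\to\P^1$ and $C_t/D_p\to\P^1$. Both are ordinary families of double covers of $\P^1$: the first is branched exactly at the $k$ reflection points, the second at at least $l-k$ points --- this is where the odd-$u$ hypothesis enters, since $r^u\notin\langle r^2,s\rangle$ precisely when $u$ is odd, so every rotation branch point survives in the second quotient. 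Each hyperelliptic quotient contributes an eigenspace of completely explicit type, roughly $(\tfrac{k}{2}-1,\tfrac{k}{2}-1)$ and $(\tfrac{l-k}{2}-1,\tfrac{l-k}{2}-1)$, with full symplectic monodromy by the classical result for hyperelliptic pencils. Because these sit in the distinct isogeny factors $P(C_{\langle r\rangle}/\P^1)$ and $P(C_{D_p}/\P^1)$ of decomposition~\eqref{dihed decomp}, their $\delta$-contributions add, and the elementary quadratic inequality $k^2+(l-k)^2>8(l-2)$ finishes the argument uniformly in $k$, with no case split.

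Your single-eigenspace route might be salvageable, but two steps are not justified. First, the claim that the monodromy on $V_\chi\oplus V_{\bar\chi}$ is the full $\psu(a,b)$: Remark~\ref{smallestshimura} is formulated for eigenspaces of abelian covers of $\P^1$, whereas in your tower the base curve $X_t$ itself varies with $t$, so the standard Deligne--Mostow-type input does not apply directly; moreover your supporting remark that the exponents avoid $\{0,\tfrac12\}$ is false whenever some rotation monodromy equals $r^p$, since then $\{jp/2p\}=\tfrac12$ for every $j$ coprime to $2p$. Second, the $k\le 2$ case is only a sketch: ``at least $18$ branch points rules out every faithful $\chi$ having type $\{0,*\}$'' is an assertion, not an argument, and it is exactly here that having only one eigenspace (rather than the paper's two, coming from independent isogeny factors) makes life hard. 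The paper's two-quotient trick sidesteps both issues and is considerably shorter.
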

\begin{proof}
Suppose that the family is given by the branch points $(t_1,\ldots, t_l)$ with $l>9$. Now consider the the normal subgroup $N:=\langle r\rangle$ of index 2 and the quotient family of double covers $C_t/N\to \P^1$. Let $k$ be the number of branch points whose ramification is a reflection, i.e., of the form $r^as$ with $a\geq 0$.  Then the above quotient family corresponds to the family of covers of $\P^1$ with monodromy data $(2,(\underbrace{1,1,\ldots, 1}_{k}, 0,\ldots, 0))$, where the number of 1's is equal to $k$. This family has an eigenspace $\sL$ of the type $(\frac{k}{2}-1, \frac{k}{2}-1)$ and hence $\delta(\sL)=\frac{1}{2}(\frac{k^2}{4}-1)$, where $\delta(\sL)$ is introduced in Remark \ref{smallestshimura}. On the other hand considering the normal subgroup $D_p=\langle r^2, s\rangle$ of index 2, the quotient family $C_t/D_p\to \P^1$ yields a cover with the monodromy $(2,(\underbrace{1,1,\ldots, 1}_{\geq l-k}, 0\ldots, 0))$. As before, this family has an eigenspace $\sM$ with $\delta(\sM)\geq \frac{1}{2}(\frac{(l-k)^2}{4}-1)$ as in Remark \ref{smallestshimura}. Using the decomposition \ref{dihed decomp} and the above formula, we see that our in the decomposition of the adjoint group of our original family factors $\sL$ and $\sM$ occure and therefore the dimension of the smallest Shimura subvariety containing this family is at least $\delta(\sL)+\delta(\sM)$. Therefore if we have 
\begin{equation}\label{dimineq}
\delta(\sL)+\delta(\sM)\geq \frac{1}{2}(\frac{k^2}{4}-1)+\frac{1}{2}(\frac{(l-k)^2}{4}-1)>l-3,
\end{equation}
the family is not special. Now ~\ref{dimineq}, is equivalent to $k^2+(l-k)^2>8(l-2)$. This in turn yields the inequality $l^2-2(k+4)l+2(k^2+8)>0$. It is easy to verify that by our assumptions and conditions the above inequality holds true. 
\end{proof}

\begin{remark}
From the (1-dimensional) families of $D_{2p}$- covers of $\P^1$ in \cite{FGP} which give rise to Shimura subvariesties, none have the property that the ramifications  in $\langle r\rangle$ are all of the form  $r^u$ with $u$ odd. Therefore in view of Proposition \ref{dihed geq 3} one expects that such Shimura families should not exist. 
\end{remark}

\subsection{Families of $Q_8$-covers} 
Let $Q_8$ be the quaternion group, i.e., the following group
\[Q_8:=\langle i,j| i^2=j^2=(ij)^2=-1\rangle\]
If $X\to Y$ is a $Q_8$-cover, then 
\[J(X)\sim J(Y)\times P(X_{\langle i\rangle}/Y)\times P(X_{\langle j\rangle}/Y)\times P(X_{\langle ij\rangle}/Y)\times P(X/X_{\langle -1\rangle})\]
Consider the $Q_8$-cover given by the monodromy data $(2,4,4,4)$. 

\begin{equation} \label{commutative repres}
\begin{tikzcd}
C \arrow{rr}{} \arrow[swap]{dr}{4:1}& &\P^1\\
& C/\langle i\rangle  \arrow[swap]{ur}{2:1}& 
\end{tikzcd}  
\end{equation}
The intermediate cover $C\to C/\langle i\rangle$ is cyclic of order $4$. One checks that $g(C/\langle i\rangle)=0$ and so $C/\langle i\rangle\simeq \P^1$ and indeed the quotient cover $C/\langle i\rangle\to\P^1$ is a double cover ramified at two points. The same is true about $C/\langle j\rangle$ and $C/\langle ij\rangle$. 
Therefore we have that  $ P(C_{\langle i\rangle}/\P^1)= P(C_{\langle j\rangle}/\P^1)= P(C_{\langle ij\rangle}/\P^1))=J(\P^1)=0$. 
On the other hand 

\begin{equation} \label{commutative repres}
\begin{tikzcd}
C \arrow{rr}{} \arrow[swap]{dr}{2:1}& &\P^1\\
& C/\langle -1\rangle  \arrow[swap]{ur}{4:1}& 
\end{tikzcd}  
\end{equation}
We remark that $\langle -1\rangle$ is the center of $Q_8$ and that $Q_8/\langle -1\rangle\simeq \Z_2\times \Z_2$. So the $4:1$ cover $C/\langle -1\rangle\to \P^1$ is an abelian $\Z_2\times \Z_2$-cover. One checks that this cover has 3 branch points all of which of ramification index 2 and therefore $C/\langle-1\rangle\simeq \P^1$. The double cover $C\to C/\langle-1\rangle= \P^1$ is ramified over 10 points and therefore exhibits $C$ as a hyperelliptic curve of genus 4. Therefore $P(C/C_{\langle -1\rangle})=J(C)$ and the family is a Shimura curve contained in the hyperelliptic locus. Note that in this family, the Jacobians $J(C_t)$ are simple abelian varieties. Indeed one can show that this is the only family of simple Jacobians of $\Q_8$-covers up to isomorphism over a Shimura curve.
\begin{proposition}
A 1-dimensional family of $Q_8$-covers of $\P^1$ such that the Jacobian $J(C_t)$ is a  simple abelian variety is a hyperelliptic family and monodromy datum is $(2,4,4,4)$, i.e., it is isomorphic to the above family. 
\end{proposition}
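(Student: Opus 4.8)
The plan is to read off the ramification type of the family from the Prym decomposition of $J(C)$ displayed above, combined with elementary arithmetic on the generating vector. Since the family is $1$-dimensional the covers in it have exactly $4$ branch points, so the datum is an (unordered) tuple $\mi$ with entries in $\{2,4\}$, these being the orders of the nontrivial elements of $Q_8$. As $-1$ is the unique involution of $Q_8$, a branch point with entry $2$ has monodromy $-1$; if two or more entries were $2$, two of the monodromies would equal $-1$ and the remaining one or two would generate a cyclic group, so the monodromy vector could not generate $Q_8$. Hence $\mi$ is $(2,4,4,4)$ or $(4,4,4,4)$, and in either case Riemann--Hurwitz for the degree-$8$ cover $C\to\P^1$ gives $g(C)\ge 4$.

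With $Y=C/Q_8=\P^1$ we have $J(Y)=0$ and $P(C_{\langle i\rangle}/\P^1)=J(C/\langle i\rangle)$, so the decomposition becomes
\[
J(C)\ \sim\ J(C/\langle i\rangle)\times J(C/\langle j\rangle)\times J(C/\langle ij\rangle)\times P(C/C_{\langle -1\rangle}).
\]
Its four summands are, up to isogeny, abelian subvarieties of $J(C)$; since $J(C)$ is simple and nonzero, exactly one of them is nonzero, it is isogenous to $J(C)$, and the others vanish. The factor $J(C/\langle i\rangle)$ cannot be the surviving one, since that would force $g(C/\langle i\rangle)=g(C)$, whereas $C\to C/\langle i\rangle$ has degree $4$ and Riemann--Hurwitz gives $g(C)-1\ge 4(g(C/\langle i\rangle)-1)$, i.e. $g(C)\le 1$, against $g(C)\ge 4$; the same excludes $J(C/\langle j\rangle)$ and $J(C/\langle ij\rangle)$. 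Hence $P(C/C_{\langle -1\rangle})\sim J(C)$ while $J(C/\langle i\rangle)=J(C/\langle j\rangle)=J(C/\langle ij\rangle)=0$.

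Vanishing of $J(C/\langle i\rangle)$ means the double cover $C/\langle i\rangle\to\P^1$ (the $Q_8/\langle i\rangle\cong\Z/2$ quotient, branched at the points with $h_t\notin\langle i\rangle$) has genus $0$, so exactly two of the four monodromies lie outside $\langle i\rangle$; likewise for $\langle j\rangle$ and $\langle ij\rangle=\langle k\rangle$. Writing $a,b,c,d$ for the number of branch points whose monodromy equals $-1$, lies in $\{\pm i\}$, in $\{\pm j\}$, in $\{\pm k\}$, these three conditions read $c+d=b+d=b+c=2$, which force $b=c=d=1$, hence $a=1$; so the datum is $(2,4,4,4)$, the alternative $(4,4,4,4)$ being impossible since it would need $a=0$. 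Then $g(C)=4$ by Riemann--Hurwitz, the $(\Z/2)^2$-quotient $C/\langle -1\rangle\to\P^1$ is branched only over the three order-$4$ points with the three distinct nontrivial monodromies and so has genus $0$, whence $C\to C/\langle -1\rangle\cong\P^1$ exhibits $C$ as a hyperelliptic curve of genus $4$; finally a direct finite check shows all generating vectors realizing $(2,4,4,4)$ are equivalent under $\aut(Q_8)$ and the braid action, so the family is unique up to isomorphism and coincides with the one constructed before the proposition.

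The step I expect to be the real obstacle is isolating $P(C/C_{\langle -1\rangle})$ as the surviving factor: one has to rule out that the simple $J(C)$ is one of the smaller Jacobians $J(C/\langle i\rangle)$, and this is precisely where the a priori bound $g(C)\ge 4$ (any bound $g(C)\ge 2$ would suffice) is indispensable. The closing uniqueness assertion is routine but still needs a genuine finite computation with generating vectors of $Q_8$.
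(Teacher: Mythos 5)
Your argument is correct, and it runs on the same engine as the paper's proof --- simplicity of $J(C_t)$ against the displayed isogeny decomposition for $Q_8$-covers --- but you pivot on different factors. The paper argues in one step at the centre: simplicity forces $C/\langle -1\rangle\simeq\P^1$ (else $J(C/\langle -1\rangle)$ is a nontrivial proper isogeny factor), hence at least one branch point has monodromy $-1$, and then "one easily checks" the indices are $(2,4,4,4)$. You instead first kill the three double-cover Jacobians $J(C/\langle i\rangle)$, $J(C/\langle j\rangle)$, $J(C/\langle ij\rangle)$ by the degree-$4$ Riemann--Hurwitz bound plus simplicity, and extract from their vanishing the count $b=c=d=1$, $a=1$ of monodromies in $\{\pm i\},\{\pm j\},\{\pm k\},\{-1\}$; the rationality of $C/\langle -1\rangle$ and the hyperellipticity then come out at the end rather than serving as the starting point. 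Your route is slightly longer but more self-contained: it makes explicit the bookkeeping the paper compresses (four branch points, exclusion of two or more order-$2$ entries via surjectivity, the bound $g(C)\ge 4$ needed to rule out the quotient Jacobians as the surviving simple factor, and the genus-$0$ computation for the $(\Z/2)^2$-quotient), and it even pins down the sharper statement that the three order-$4$ monodromies lie in the three distinct cyclic subgroups. The only point you assert rather than carry out is the final uniqueness of the generating vector up to $\aut(Q_8)$ and braid moves; this is indeed a routine finite check (with $-1$ placed first, $\aut(Q_8)$ of order $24$ acts simply transitively on the $24$ admissible triples), and the paper is no more explicit about it than you are.
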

\begin{proof}
Assume that we have a family such that the Jacobian $J(C_t)$ is simple and consider again the following diagram
\begin{equation} \label{commutative repres}
\begin{tikzcd}
C \arrow{rr}{} \arrow[swap]{dr}{2:1}& &\P^1\\
& C/\langle -1\rangle  \arrow[swap]{ur}{4:1}& 
\end{tikzcd}  
\end{equation}
Now since $J(C_t)$ is simple it holds that $C/\langle -1\rangle\simeq\P^1$, for otherwise $J(C/\langle -1\rangle)$ will be a non-trivial isogeny factor of $J(C_t)$. Consequently, the ramification over at least one branch point must be given by the element $(-1)$. One easily checks that the only possible monodromy indices are $(2,4,4,4)$. 
\end{proof}

\begin{proposition}
There does not exist a 1-dimensional Shimura family of non-hyperelliptic $Q_8$-covers of $\P^1$. 
\end{proposition}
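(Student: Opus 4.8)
The plan is to reduce the statement to a single explicit family and then to show that this family is too large to be special. By Remark~\ref{smallestshimura} a $1$-dimensional family whose image in $A_g$ is a Shimura subvariety must have $l-3=1$, i.e. exactly $l=4$ branch points, so I would first enumerate all $Q_8$-covers of $\P^1$ with four branch points. A non-trivial element of $Q_8$ has order $2$ (only $-1$) or $4$, so the ramification datum is $(m_1,\dots,m_4)$ with $m_i\in\{2,4\}$; since $-1$ by itself generates only $\langle -1\rangle$, one checks that the only data admitting an epimorphism $\Gamma_4\twoheadrightarrow Q_8$ are, up to permutation, $(2,4,4,4)$ (with $g(C)=4$) and $(4,4,4,4)$ (with $g(C)=5$). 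The datum $(2,4,4,4)$ is the hyperelliptic family already treated above, so it is not a counterexample, and everything reduces to the datum $(4,4,4,4)$.

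For the datum $(4,4,4,4)$ I would first record that these curves are genuinely non-hyperelliptic: $C/\langle -1\rangle$ is an elliptic curve $E'$ (a $(\Z/2)^2$-cover of $\P^1$ branched at the four points) and $C\to E'$ has degree $2$, so $C$ is bielliptic, and since $g(C)=5>3$ the Castelnuovo--Severi inequality forbids $C$ from being simultaneously hyperelliptic and bielliptic. Thus this is exactly the family that must be shown not to be special. Next I would record its decomposition: after an automorphism of $Q_8$ the quotients $C/\langle i\rangle$ and $C/\langle j\rangle$ are rational and $E_t:=C_t/\langle ij\rangle$ is elliptic, so by the decomposition recalled just before the statement $J(C_t)\sim E_t\times B_t$ with $B_t:=P(C_t/E_t')$ of dimension $4$, carrying the action of the definite rational quaternion algebra $D=\en_{Q_8}(\rho)$ attached to the $2$-dimensional irreducible representation $\rho$ of $Q_8$. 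Since $E_t\to\P^1$ is the double cover branched at the four moving points, the family $E_t$ is non-isotrivial, so $\mon^0$ surjects onto $\SL_2$ acting on $H^1(E_t)$ and $\mon^{0,\mathrm{ad}}_\R$ contains a factor $PSU(1,1)$ with $\delta=1$.

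Now suppose the family is special. By Remark~\ref{smallestshimura} we need $\dim S_f=l-3=1$, and by Remark~\ref{monodromydecomp} $\mon^{0}=M^{\mathrm{der}}$; since $\sum\delta(Q_i)=1$ the non-compact part of $\mon^{0,\mathrm{ad}}_\R$ is a single $PSU(1,1)$, so $\mon^0$ acts on $H^1(B_t)$ only through that same $\SL_2$ together with a compact group, i.e. the variation of $B_t$ is forced to be ``subordinate'' to that of $E_t$. The decisive point is to show this cannot happen for $B_t=P(C_t/E_t')$: equivalently, that the period map of the quaternionic fourfold $B_t$ is not a modular function of $\tau(E_t)$, so that $\mon^0$ acts on $H^1(B_t)$ through a non-compact simple factor genuinely different from the elliptic one and $\dim S_f\geq 2>1$, a contradiction. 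I would attack this through the explicit ($\lambda$-)parametrization of the family: show first that $B_t$ is non-isotrivial and generically simple with $\en^0(B_t)=D$ (so in particular $B_t\not\sim E_t^{4}$), e.g. via a prime of good reduction and its Frobenius, and then that the period of $B_t$ and $\tau(E_t)$ are not M\"obius-related, e.g. by comparing their behaviour over the cusps $\lambda\in\{0,1,\infty\}$.

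The step I expect to be the real obstacle is precisely this last one: proving that the variation of the $4$-dimensional quaternionic Prym $P(C_t/E_t')$ is genuinely independent of the elliptic factor (note that simplicity of $B_t$ alone is not enough, as it does not exclude the ``twisted'' special case in which $\mon^0$ acquires only a compact extra factor), so that the smallest special subvariety containing the family has dimension at least $2$. Everything else --- the enumeration of the ramification data, non-hyperellipticity via Castelnuovo--Severi, and the reduction to this single question via Remarks~\ref{smallestshimura} and~\ref{monodromydecomp} --- is routine.
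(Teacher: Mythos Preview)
Your reduction is exactly the paper's: you correctly enumerate the two data $(2,4,4,4)$ and $(4,4,4,4)$, discard the first as hyperelliptic, and for the second obtain the isogeny decomposition $J(C_t)\sim E_t\times P(C_t/E'_t)$ with $E_t=C_t/\langle ij\rangle$ the Legendre family contributing a $PSU(1,1)$ factor with $\delta=1$. (Your Castelnuovo--Severi verification that the $(4,4,4,4)$ family is genuinely non-hyperelliptic is a nice addition; the paper does not need it, since it only uses the implication non-hyperelliptic $\Rightarrow$ no $(-1)$-ramification $\Rightarrow$ datum $(4,4,4,4)$.)

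The divergence is at the last step, precisely the one you flag as the obstacle. The paper does \emph{not} attempt the direct analysis you propose (simplicity of $B_t$, comparison of periods at the cusps, etc.). Instead it observes that $P(C_t/E'_t)$ is a family of Prym varieties of ramified double covers of an elliptic curve and invokes \cite{GM}, where all Shimura curves in the Prym locus of ramified Galois coverings are classified and none of this type occur. From that, the smallest special subvariety containing the image has dimension at least $2$, and the $(4,4,4,4)$ family is not special. So the ``decisive point'' you isolate is handled by an external classification result rather than by an ad hoc computation; your programme would reprove a special case of \cite{GM} from scratch, which is possible but substantially harder than what the paper actually does.
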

\begin{proof}
Suppose that we have a 1-dimensional family of $Q_8$-covers of $\P^1$ whose fibers are non-hyperelliptic. We claim that are no branch points with ramification $(-1)$ for if there exists such a point, then $C/\langle -1\rangle\simeq\P^1$ and the cover $C\to C/\langle -1\rangle=\P^1$ exhibits $C$ as a hyperelliptic curve, a contradiction. This shows that the monodromy data of the family is of the form $(i,i,j,j)$ which corresponds to monodromy indices $(4,4,4,4)$. If $C\to\P^1$ is a $Q_8$-cover with the above monodromy, then by the Riemann-Hurwitz formula, $g(C)=5$. Then we have
\begin{equation} \label{commutative repres}
\begin{tikzcd}
C \arrow{rr}{} \arrow[swap]{dr}{4:1}& &\P^1\\
& C/\langle i\rangle  \arrow[swap]{ur}{2:1}& 
\end{tikzcd}  
\end{equation}
the quotient cover $C/\langle i\rangle\to\P^1$ is a double cover ramified at two points and hence $g(C/\langle i\rangle)=0$ and $C/\langle i\rangle\simeq\P^1$. The same is true also about the quotient cover $C/\langle j\rangle\to\P^1$. The quotient cover $C/\langle ij\rangle\to\P^1$ is the Legendre family of elliptic curves which is a special family. In particular, $g(C/\langle ij\rangle)=1$. Therefore by the above we have 
\[J(C)\sim C_{\langle ij\rangle}\times P(C/C_{\langle -1\rangle})\]
Therefore this family is not a Shimura family. Note that $C_{\langle ij\rangle}$ is a 1-dimensional Shimrua subvariety. Also, in the families found in \cite{GM} there are no Shimura families of Prym varieties of double covers of elliptic curves. This shows that the smallest Shimura subvariety that contains this family has dimension at least 2. 
\end{proof}

\end{document}